\documentclass[11pt,a4paper]{article}
\usepackage{amsfonts}
\usepackage{latexsym}
\usepackage{cite}
\usepackage{amsmath,amsfonts,latexsym,amssymb}
\usepackage[mathscr]{eucal}
\usepackage{cases,color}
\usepackage{amsthm}
\usepackage[bf,small]{caption2}
\usepackage{float}
\usepackage{graphicx}
\usepackage{amsmath}
\usepackage{amssymb}
\usepackage[all]{xy}

\newtheorem{theorem}{theorem}[section]

\newtheorem{cla}[theorem]{Claim}

\newtheorem{lem}[theorem]{Lemma}

\newtheorem{rmk}[theorem]{Remark}
\newtheorem{thm}[theorem]{Theorem}

\begin{document}

\title{\vspace{-2cm}\textbf{The ${\rm SL}(2,\mathbb{C})$-character variety of the magic $3$-manifold}}
\author{\Large Haimiao Chen}

\date{}
\maketitle

\begin{abstract}
  We determine the irreducible ${\rm SL}(2,\mathbb{C})$-character variety of the 3-chain link exterior which is called the `magic $3$-manifold', and deduce a formula for the twisted Alexander polynomial associated to each ${\rm SL}(2,\mathbb{C})$-representation.

  \medskip
  \noindent {\bf Keywords:} character variety; irreducible representation; the magic 3-manifold; twisted Alexander polynomial   \\
  {\bf MSC2020:} 57K10, 57K31
\end{abstract}

\section{Introduction}

Throughout, let $G={\rm SL}(2,\mathbb{C})$. Given a finitely presented group $\Gamma$, call a homomorphism $\rho:\Gamma\to G$ a
{\it $G$-representation} of $\Gamma$.
Define the {\it character} of $\rho$ as the function $\chi_\rho:\Gamma\to\mathbb{C}$, $x\mapsto{\rm tr}(\rho(x))$.
Call $\rho$ reducible if elements of ${\rm Im}(\rho)$ have a common eigenvector; otherwise, call $\rho$ irreducible.
As a well-known fact, two irreducible representations $\rho,\rho'$ have the same character if and only if they are conjugate, meaning that there exists $\mathbf{a}\in G$ such that $\rho'(x)=\mathbf{a}\rho(x)\mathbf{a}^{-1}$ for all $x\in\Gamma$.
Call $\hom(\Gamma,G)$ the {\it $G$-representation variety} of $\Gamma$ and denote it by $\mathcal{R}(\Gamma)$.
The set $\mathcal{X}(\Gamma)=\{\chi_\rho\colon\rho\in\mathcal{R}(\Gamma)\}$ can be defined by finitely many polynomials, and is called the
$G$-{\it character variety} of $\Gamma$. The subset $\mathcal{X}^{\rm irr}(\Gamma)$ consisting of characters of irreducible representations is Zariski open, and is called the {\it irreducible character variety}.

Character variety has been playing a significant role in low-dimensional topology.
For a link $L\subset S^3$, let $E_L$ denote its exterior; let $\pi(L)=\pi_1(E_L)$.
Abbreviate $\mathcal{R}(\pi(L))$ to $\mathcal{X}(L)$ and call it the $G$-character variety of $L$, and so forth.
In the literature, there have been a lot of results on character varieties of knots, but no explicit result is obtained for links with at least 3-components, except the Borromean link \cite{CY24,HP23,MS22}.

\begin{figure}[H]
  \centering
  \includegraphics[width=4cm]{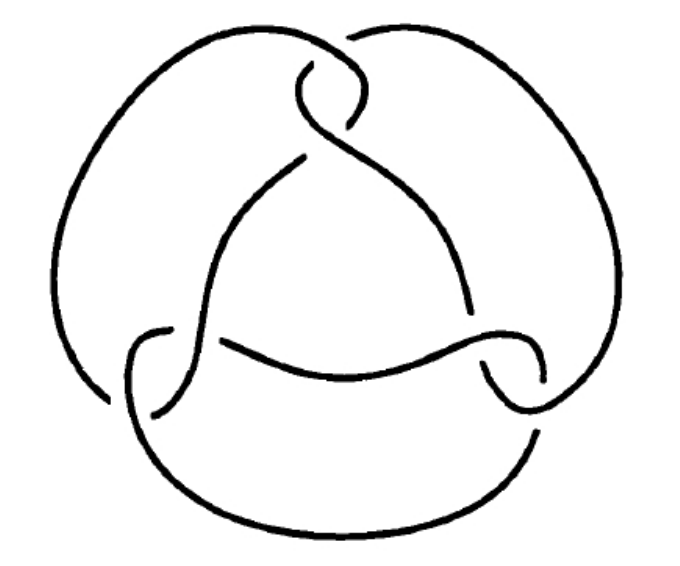}\\
  \caption{The 3-chain link $C$.}\label{fig:magic}
\end{figure}

In this paper, we focus on the 3-chain link (denoted by $C$ and shown in Figure \ref{fig:magic}), which is also the $(2,2,2)$-pretzel link. Its exterior $E_C$ is hyperbolic, as known to Thurston \cite{Th80}, and was called the `magic 3-manifold' by Gordon and Wu \cite{Go98,GW99}, due to that by Dehn filling, one may get many hyperbolic manifolds with small volumes, as well as some important examples of exceptional fillings of cusped hyperbolic manifolds. Non-hyperbolic Dehn fillings had been clarified by Martelli and Petronio \cite{MP06}.

For such an interesting link, it is worth working out the $G$-character variety.
We find that the irreducible character variety $\mathcal{X}^{\rm irr}(C)$ has 9 irreducible components; see Theorem \ref{thm:main}. In particular, we explicitly describe the canonical component, i.e. the one containing the character of a lift of the holonomy representation $\pi(C)\to{\rm PSL}(2,\mathbb{C})$.
Moreover, we deduce an elegant formula for the twisted Alexander polynomial associated to each representation (see Theorem \ref{thm:TAP}), which is an important invariant.

\section{The character variety}

\subsection{Preparation}\label{sec:preparation}

We use boldface letters to denote $2\times 2$ matrices, which are possibly not in $G$.
Let $\mathbf{e}$ denote the $2\times 2$ identity matrix.

For $\mathbf{x}\in G$, let ${\rm Cen}(\mathbf{x})=\{\mathbf{u}\in G\colon \mathbf{x}\mathbf{u}=\mathbf{u}\mathbf{x}\}$.
As explained on \cite[Page 6]{CY24}, when $\mathbf{x}\ne\pm\mathbf{e}$, each element of ${\rm Cen}(\mathbf{x})$ has the form $\alpha\mathbf{x}+\beta\mathbf{e}$.

For the following two paragraphs, refer to \cite[Section 3.3]{CY24}.

Given $\mathbf{x}_1,\mathbf{x}_2\in G$, they share an eigenvector if and only if ${\rm tr}([\mathbf{x}_1,\mathbf{x}_2])=2$, where the commutator
$[\mathbf{x}_1,\mathbf{x}_2]=\mathbf{x}_1\mathbf{x}_2\mathbf{x}_1^{-1}\mathbf{x}_2^{-1}$.
Let $t_1={\rm tr}(\mathbf{x}_1)$, $t_2={\rm tr}(\mathbf{x}_2)$, $t_{12}={\rm tr}(\mathbf{x}_1\mathbf{x}_2)$.
Then
\begin{align}
{\rm tr}([\mathbf{x}_1,\mathbf{x}_2])=t_{12}^2-t_1t_2t_{12}+t_1^2+t_2^2-2.    \label{eq:tr(commutator)}
\end{align}
When ${\rm tr}[\mathbf{a}_1,\mathbf{a}_2]\ne 2$, up to (simultaneous) conjugacy $(\mathbf{a}_1,\mathbf{a}_2)$ is determined by $t_1,t_2,t_{12}$.

Let $F_3=\langle x_1,x_2,x_3\mid-\rangle$, the free group on $x_1,x_2,x_3$. It is known that through
$\chi\mapsto t_{i_1\cdots i_r}=\chi(x_{i_1\cdots i_r})$ for $1\le i_1<\cdots<i_r\le 3$, the character variety $\mathcal{X}(F_3)$ is isomorphic to
\begin{align}
\big\{(t_1,t_2,t_3,t_{12},t_{13},t_{23}, t_{123})\colon t_{123}^2-\nu_1t_{123}+\nu_0=0\big\}\subset\mathbb{C}^7,   \label{eq:F3}
\end{align}
for certain $\nu_0,\nu_1\in\mathbb{Z}[t_1,t_2,t_3,t_{12},t_{13},t_{23}]$.
So the character variety of any 3-generator group can be embedded into the hypersurface given by (\ref{eq:F3}).

For any $2\times 2$ matrix $\mathbf{a}$ over any commutative ring, by Cayley-Hamilton Theorem,
$\mathbf{a}^2={\rm tr}(\mathbf{a})\mathbf{a}-\det(\mathbf{a})\mathbf{e}$.
Let $\mathbf{a}^\ast$ denote the adjoint matrix of $\mathbf{a}$. Then $\mathbf{a}^\ast={\rm tr}(\mathbf{a})\mathbf{e}-\mathbf{a}$.
In particular, $\mathbf{a}^\ast=\mathbf{a}^{-1}$ if $\det(\mathbf{a})=1$.

\begin{lem}\label{lem:det}
For any $2\times 2$ matrices $\mathbf{a},\mathbf{b}$,
\begin{align*}
\mathbf{a}\mathbf{b}+\mathbf{b}\mathbf{a}&={\rm tr}(\mathbf{a})\mathbf{b}+{\rm tr}(\mathbf{b})\mathbf{a}
-{\rm tr}(\mathbf{a}\mathbf{b}^\ast)\mathbf{e},  \\
\det(\mathbf{a}+\mathbf{b})&=\det(\mathbf{a})+\det(\mathbf{b})+{\rm tr}(\mathbf{a}\mathbf{b}^\ast).
\end{align*}
In particular,
$\det(\mathbf{e}+\mathbf{a})=1+\det(\mathbf{a})+{\rm tr}(\mathbf{a}).$
\end{lem}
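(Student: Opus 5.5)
The plan is to derive both identities from the Cayley--Hamilton relation $\mathbf{c}^2={\rm tr}(\mathbf{c})\mathbf{c}-\det(\mathbf{c})\mathbf{e}$, valid for any $2\times 2$ matrix over a commutative ring, together with the adjoint formula $\mathbf{b}^\ast={\rm tr}(\mathbf{b})\mathbf{e}-\mathbf{b}$ recalled just before the lemma. This keeps everything coordinate-free. As a fallback, every identity in the statement is polynomial in the eight entries of $\mathbf{a},\mathbf{b}$, so a direct entrywise check is always available.

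First, apply Cayley--Hamilton to $\mathbf{c}=\mathbf{a}+\mathbf{b}$ and expand both sides:
\begin{align*}
\mathbf{a}^2+\mathbf{b}^2+\mathbf{a}\mathbf{b}+\mathbf{b}\mathbf{a}=({\rm tr}(\mathbf{a})+{\rm tr}(\mathbf{b}))(\mathbf{a}+\mathbf{b})-\det(\mathbf{a}+\mathbf{b})\mathbf{e}.
\end{align*}
Substituting $\mathbf{a}^2={\rm tr}(\mathbf{a})\mathbf{a}-\det(\mathbf{a})\mathbf{e}$ and the analogous formula for $\mathbf{b}^2$ gives
\begin{align*}
\mathbf{a}\mathbf{b}+\mathbf{b}\mathbf{a}={\rm tr}(\mathbf{a})\mathbf{b}+{\rm tr}(\mathbf{b})\mathbf{a}-\delta\,\mathbf{e},\qquad \delta=\det(\mathbf{a}+\mathbf{b})-\det(\mathbf{a})-\det(\mathbf{b}).
\end{align*}

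Next, take traces of this relation to identify $\delta$. Using ${\rm tr}(\mathbf{a}\mathbf{b})={\rm tr}(\mathbf{b}\mathbf{a})$, we get $2{\rm tr}(\mathbf{a}\mathbf{b})=2{\rm tr}(\mathbf{a}){\rm tr}(\mathbf{b})-2\delta$. On the other hand,
\begin{align*}
{\rm tr}(\mathbf{a}\mathbf{b}^\ast)={\rm tr}(\mathbf{a}({\rm tr}(\mathbf{b})\mathbf{e}-\mathbf{b}))={\rm tr}(\mathbf{a}){\rm tr}(\mathbf{b})-{\rm tr}(\mathbf{a}\mathbf{b}),
\end{align*}
so $2\delta=2{\rm tr}(\mathbf{a}\mathbf{b}^\ast)$.

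The main obstacle is the division by $2$ in this step. To avoid it, I would prove $\delta={\rm tr}(\mathbf{a})\,{\rm tr}(\mathbf{b})-{\rm tr}(\mathbf{a}\mathbf{b})$ directly as a polynomial identity over $\mathbb{Z}$ in the entries. Writing $\mathbf{a}=(a_{ij})$ and $\mathbf{b}=(b_{ij})$, both sides equal $a_{11}b_{22}+a_{22}b_{11}-a_{12}b_{21}-a_{21}b_{12}$, which is a one-line computation. Alternatively, one can note that the identity holds over $\mathbb{Q}$ and hence over $\mathbb{Z}[\text{entries}]$, and then specialize to an arbitrary commutative ring. This gives the determinant formula, and substituting it back gives the first formula.

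Finally, for the special case, put $\mathbf{a}=\mathbf{e}$ and rename $\mathbf{b}$ as $\mathbf{a}$. Since $\mathbf{a}^\ast={\rm tr}(\mathbf{a})\mathbf{e}-\mathbf{a}$, we have ${\rm tr}(\mathbf{e}\mathbf{a}^\ast)=2{\rm tr}(\mathbf{a})-{\rm tr}(\mathbf{a})={\rm tr}(\mathbf{a})$. Together with $\det(\mathbf{e})=1$, this yields $\det(\mathbf{e}+\mathbf{a})=1+\det(\mathbf{a})+{\rm tr}(\mathbf{a})$.
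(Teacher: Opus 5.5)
Your proof is correct, but it runs in the opposite direction from the paper's. Both arguments use Cayley--Hamilton for $\mathbf{a}+\mathbf{b}$ to show that the anticommutator identity and the determinant identity are equivalent once the scalar $\det(\mathbf{a}+\mathbf{b})-\det(\mathbf{a})-\det(\mathbf{b})$ is identified with ${\rm tr}(\mathbf{a}\mathbf{b}^\ast)$. They differ in where the independent input comes from.

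The paper proves the anticommutator identity first. It writes ${\rm tr}(\mathbf{a}\mathbf{b})\mathbf{e}=\mathbf{a}\mathbf{b}+(\mathbf{a}\mathbf{b})^\ast$ and expands $(\mathbf{a}\mathbf{b})^\ast=\mathbf{b}^\ast\mathbf{a}^\ast=({\rm tr}(\mathbf{b})\mathbf{e}-\mathbf{b})({\rm tr}(\mathbf{a})\mathbf{e}-\mathbf{a})$. It then substitutes this into Cayley--Hamilton for $\mathbf{a}+\mathbf{b}$ to read off the determinant. You instead use Cayley--Hamilton to get the anticommutator identity up to the unknown scalar $\delta$, and then determine $\delta$ by writing out the polarization of the determinant in coordinates.

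What each buys: the paper's version is coordinate-free and never divides by $2$. However, it takes the anti-multiplicativity $(\mathbf{a}\mathbf{b})^\ast=\mathbf{b}^\ast\mathbf{a}^\ast$ of the adjugate as known. For $2\times 2$ matrices, given $\mathbf{c}^\ast={\rm tr}(\mathbf{c})\mathbf{e}-\mathbf{c}$, that fact is exactly equivalent to the first identity, so the substance is imported from the general theory of adjugates. Your version isolates that substance in an explicit four-term bilinear check and is self-contained over any commutative ring. The concern about the factor $2$ never arises in the paper's uses (over $\mathbb{C}$ and $\mathbb{C}[s_1^{\pm1},s_2^{\pm1},s_3^{\pm1}]$), but your workaround is valid.
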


\begin{proof}
Since
\begin{align*}
{\rm tr}(\mathbf{a}\mathbf{b})\mathbf{e}&=\mathbf{a}\mathbf{b}+(\mathbf{a}\mathbf{b})^\ast
=\mathbf{a}\mathbf{b}+\mathbf{b}^\ast\mathbf{a}^\ast=\mathbf{a}\mathbf{b}
+({\rm tr}(\mathbf{b})\mathbf{e}-\mathbf{b})({\rm tr}(\mathbf{a})\mathbf{e}-\mathbf{a})  \\
&=\mathbf{a}\mathbf{b}+\mathbf{b}\mathbf{a}-{\rm tr}(\mathbf{a})\mathbf{b}-{\rm tr}(\mathbf{b})\mathbf{a}
+{\rm tr}(\mathbf{a}){\rm tr}(\mathbf{b})\mathbf{e},
\end{align*}
and ${\rm tr}(\mathbf{a}){\rm tr}(\mathbf{b})-{\rm tr}(\mathbf{a}\mathbf{b})={\rm tr}(\mathbf{a}\mathbf{b}^\ast)$, we have the first identity.

To show the second identity, we proceed as
\begin{align*}
\det(\mathbf{a}+\mathbf{b})\cdot\mathbf{e}&={\rm tr}(\mathbf{a}+\mathbf{b})\cdot(\mathbf{a}+\mathbf{b})-(\mathbf{a}+\mathbf{b})^2  \\
&={\rm tr}(\mathbf{a}+\mathbf{b})\cdot(\mathbf{a}+\mathbf{b})-\mathbf{a}^2-\mathbf{b}^2-(\mathbf{a}\mathbf{b}+\mathbf{b}\mathbf{a})  \\
&={\rm tr}(\mathbf{a}+\mathbf{b})\cdot(\mathbf{a}+\mathbf{b})-\big({\rm tr}(\mathbf{a})\mathbf{a}-\det(\mathbf{a})\mathbf{e}\big)  \\
&\ \ \ \ -\big({\rm tr}(\mathbf{b})\mathbf{b}-\det(\mathbf{b})\mathbf{e}\big)
-\big({\rm tr}(\mathbf{a})\mathbf{b}+{\rm tr}(\mathbf{b})\mathbf{a}-{\rm tr}(\mathbf{a}\mathbf{b}^\ast)\mathbf{e}\big)  \\
&=(\det(\mathbf{a})+\det(\mathbf{b})+{\rm tr}(\mathbf{a}\mathbf{b}^\ast))\cdot\mathbf{e}.
\end{align*}
\end{proof}

\subsection{From matrix equations to trace equations}

\begin{figure}[H]
  \centering
  \includegraphics[width=4cm]{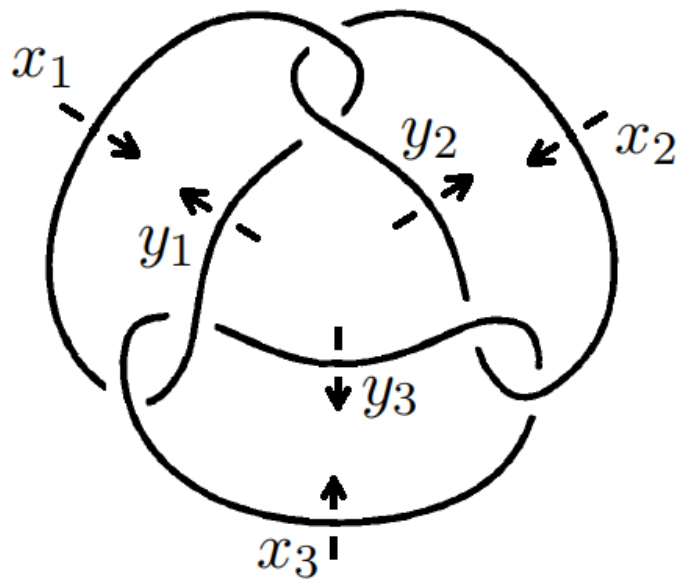}\\
  \caption{Generators of $\pi(C)$.}\label{fig:generator}
\end{figure}

Referred to Figure \ref{fig:generator}, a Wirtinger presentation of $\pi(C)$ is
\begin{align*}
\big\langle x_1,x_2,x_3,y_1,y_2,y_3\mid\ &y_1=y_2x_1y_2^{-1}, \ y_2=y_3x_2y_3^{-1}, \\
&y_1=x_3^{-1}x_1x_3, \ y_2=x_1^{-1}x_2x_1,\ y_3=x_2^{-1}x_3x_2\big\rangle.
\end{align*}
With $y_1,y_2,y_3$ substituted, this can be transformed into
\begin{align}
\big\langle x_1,x_2,x_3\mid [x_1,x_3x_1^{-1}x_2]=1, \ [x_2,x_1x_2^{-1}x_3]=1\rangle.   \label{eq:presentation}
\end{align}

Given $\underline{\mathbf{x}}=(\mathbf{x}_1,\mathbf{x}_2,\mathbf{x}_3)\in G^3$, sufficient and necessary conditions for there to exist $\rho\in\mathcal{R}(C)$ with $\rho(x_i)=\mathbf{x}_i$, $i=1,2,3$ are
\begin{align}
[\mathbf{x}_1,\mathbf{x}_3\mathbf{x}_1^{-1}\mathbf{x}_2]&=\mathbf{e},  \label{eq:relation-1}   \\
[\mathbf{x}_2,\mathbf{x}_1\mathbf{x}_2^{-1}\mathbf{x}_3]&=\mathbf{e}.  \label{eq:relation-2}
\end{align}
When these hold, $\rho$ is unique; denote it by $\rho_{\underline{\mathbf{x}}}$.

Suppose $\rho_{\underline{\mathbf{x}}}$ is irreducible.

\begin{lem}\label{lem:non-commuting}
$\mathbf{x}_i\mathbf{x}_j\ne\mathbf{x}_j\mathbf{x}_i$ for any $i\ne j$. In particular, $\mathbf{x}_1,\mathbf{x}_2,\mathbf{x}_3\ne\pm\mathbf{e}$.
\end{lem}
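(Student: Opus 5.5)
Suppose, for contradiction, that two of the generators commute. We show that then all of $\mathbf{x}_1,\mathbf{x}_2,\mathbf{x}_3$ share a common eigenvector, so $\rho_{\underline{\mathbf{x}}}$ is reducible. The tool is the fact from Section \ref{sec:preparation}: if $\mathbf{x}\ne\pm\mathbf{e}$, every element of ${\rm Cen}(\mathbf{x})$ has the form $\alpha\mathbf{x}+\beta\mathbf{e}$. Consequently, if $\mathbf{u}$ commutes with $\mathbf{x}\ne\pm\mathbf{e}$, then every eigenvector of $\mathbf{x}$ is an eigenvector of $\mathbf{u}$. Conversely, any matrix $\pm\mathbf{e}$ is central, so it imposes no constraint on common eigenvectors.

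First we handle the ``in particular'' part. If $\mathbf{x}_i=\pm\mathbf{e}$, then $\mathbf{x}_i$ commutes with everything, so it suffices to prove the main claim that no two generators commute. Fix $i\ne j$ with $\mathbf{x}_i\mathbf{x}_j=\mathbf{x}_j\mathbf{x}_i$, and let $k$ be the third index. The relations (\ref{eq:relation-1}) and (\ref{eq:relation-2}) are cyclically symmetric under $1\to2\to3\to1$ only partially: the second is the image of the first under $x_1\mapsto x_2$, $x_2\mapsto x_3$, $x_3\mapsto x_1$. The third relation $[\mathbf{x}_3,\mathbf{x}_2\mathbf{x}_3^{-1}\mathbf{x}_1]=\mathbf{e}$ follows from the Wirtinger presentation, since the link and the presentation are symmetric. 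So we may assume $(i,j)=(1,2)$ or $(1,3)$, and in fact by the symmetry just $(i,j)=(1,2)$ up to relabeling. We also keep the possibility of reversing orientation in mind.

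Case $\mathbf{x}_1\mathbf{x}_2=\mathbf{x}_2\mathbf{x}_1$. Then (\ref{eq:relation-1}) says $\mathbf{x}_1$ commutes with $\mathbf{x}_3\mathbf{x}_1^{-1}\mathbf{x}_2$, hence with $\mathbf{x}_3\mathbf{x}_1^{-1}\mathbf{x}_2\mathbf{x}_2^{-1}\mathbf{x}_1=\mathbf{x}_3$? That step is not valid, since $\mathbf{x}_2^{-1}\mathbf{x}_1$ is multiplied on the right and commuting with a product does not pass to the factor. Instead, we use the relation in conjugation form, $\mathbf{x}_2\mathbf{x}_3^{-1}\mathbf{x}_1\mathbf{x}_3\mathbf{x}_2^{-1}=\mathbf{x}_1^{-1}\mathbf{x}_2\mathbf{x}_1$, which comes from the Wirtinger form $y_1=y_2x_1y_2^{-1}$. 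Under commutativity, $y_2=\mathbf{x}_1^{-1}\mathbf{x}_2\mathbf{x}_1=\mathbf{x}_2$, so $\mathbf{x}_3^{-1}\mathbf{x}_1\mathbf{x}_3=\mathbf{x}_2\mathbf{x}_1\mathbf{x}_2^{-1}=\mathbf{x}_1$. Hence $\mathbf{x}_3$ commutes with $\mathbf{x}_1$ as well.

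Now the three generators have the following properties. Each of $\mathbf{x}_2$ and $\mathbf{x}_3$ commutes with $\mathbf{x}_1$. If $\mathbf{x}_1\ne\pm\mathbf{e}$, both lie in ${\rm Cen}(\mathbf{x}_1)$, so every eigenvector of $\mathbf{x}_1$ is a common eigenvector of all three, a contradiction. If $\mathbf{x}_1=\pm\mathbf{e}$, then $y_1=\mathbf{x}_1$, and the relation $y_2=y_3x_2y_3^{-1}$ together with $y_3=\mathbf{x}_2^{-1}\mathbf{x}_3\mathbf{x}_2$ reduces to $\mathbf{x}_2=\mathbf{x}_2^{-1}\mathbf{x}_3\mathbf{x}_2\,\mathbf{x}_2\,\mathbf{x}_2^{-1}\mathbf{x}_3^{-1}\mathbf{x}_2$. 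This is equivalent to $\mathbf{x}_2\mathbf{x}_3=\mathbf{x}_3\mathbf{x}_2$, so $\mathbf{x}_2,\mathbf{x}_3$ commute. Two commuting matrices in $G$ share an eigenvector, and the central $\mathbf{x}_1$ shares it too, again a contradiction.

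The remaining pairs $(2,3)$ and $(1,3)$ follow by the same computation applied through the cyclic symmetry of the Wirtinger relations ($y_i=x_{i-1}^{-1}x_i x_{i-1}$ cyclically, together with the analogous $y_{i}=y_{i+1}x_iy_{i+1}^{-1}$). The main obstacle is the bookkeeping needed to check that the presentation really is cyclically symmetric. Failing that, one simply redoes the derivation above for each pair, using whichever Wirtinger relation collapses once the commuting pair is substituted.
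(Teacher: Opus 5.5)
Your argument is correct and is essentially the paper's: a commuting pair, fed into one defining relation, forces one generator to commute with both others, which contradicts irreducibility. The paper feeds $\mathbf{x}_1\mathbf{x}_2=\mathbf{x}_2\mathbf{x}_1$ into (\ref{eq:relation-2}) to get $\mathbf{x}_2\mathbf{x}_3=\mathbf{x}_3\mathbf{x}_2$; you use the Wirtinger form $y_1=y_2x_1y_2^{-1}$ of (\ref{eq:relation-1}) to get $\mathbf{x}_1\mathbf{x}_3=\mathbf{x}_3\mathbf{x}_1$, and you also handle the $\pm\mathbf{e}$ subcase that the paper leaves implicit.

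Two slips do not affect your final argument. First, the step you rejected is valid and is the paper's trick: $\mathbf{x}_2^{-1}\mathbf{x}_1\in{\rm Cen}(\mathbf{x}_1)$ and ${\rm Cen}(\mathbf{x}_1)$ is a subgroup. Second, your displayed ``conjugation form'' $\mathbf{x}_2\mathbf{x}_3^{-1}\mathbf{x}_1\mathbf{x}_3\mathbf{x}_2^{-1}=\mathbf{x}_1^{-1}\mathbf{x}_2\mathbf{x}_1$ is not a relation of $\pi(C)$, since it equates conjugates of $x_1$ and $x_2$ and would force $t_1=t_2$. The next line, however, correctly uses $y_1=y_2x_1y_2^{-1}$.
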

\begin{proof}
If $\mathbf{x}_1\mathbf{x}_2=\mathbf{x}_2\mathbf{x}_1$, then (\ref{eq:relation-2}) would imply $\mathbf{x}_2\mathbf{x}_3=\mathbf{x}_3\mathbf{x}_2$, contradicting the irreducibility of $\rho_{\underline{\mathbf{x}}}$. Hence $\mathbf{x}_1\mathbf{x}_2\ne\mathbf{x}_2\mathbf{x}_1$.

Similarly, $\mathbf{x}_1\mathbf{x}_3\ne\mathbf{x}_3\mathbf{x}_1$, and $\mathbf{x}_2\mathbf{x}_3\ne\mathbf{x}_3\mathbf{x}_2$.
\end{proof}

Let $t_i={\rm tr}(\mathbf{x}_i)$, $t_{123}={\rm tr}(\mathbf{x}_1\mathbf{x}_2\mathbf{x}_3)$.
For $i\ne j$, let
$$t_{ij}={\rm tr}(\mathbf{x}_i\mathbf{x}_j), \qquad
r_{ij}={\rm tr}(\mathbf{x}_i\mathbf{x}_j^{-1})={\rm tr}(\mathbf{x}_i^{-1}\mathbf{x}_j)=t_it_j-t_{ij}.$$

As a simple observation, (\ref{eq:relation-1}), (\ref{eq:relation-2}) are respectively equivalent to
\begin{align}
\mathbf{x}_3\mathbf{x}_1^{-1}\mathbf{x}_2&=\alpha\mathbf{x}_1+\beta\mathbf{e},    \label{eq:relation-1'}  \\
\mathbf{x}_1\mathbf{x}_2^{-1}\mathbf{x}_3&=\alpha'\mathbf{x}_2+\beta'\mathbf{e},     \label{eq:relation-2'}
\end{align}
for some $\alpha,\beta,\alpha',\beta'$. 
Using
$$\mathbf{x}_1\mathbf{x}_2^{-1}\mathbf{x}_1=(\mathbf{x}_1\mathbf{x}_2^{-1})^2\mathbf{x}_2
=(r_{12}\mathbf{x}_1\mathbf{x}_2^{-1}-\mathbf{e})\mathbf{x}_2
=r_{12}\mathbf{x}_1-\mathbf{x}_2,$$
and similarly $\mathbf{x}_2\mathbf{x}_1^{-1}\mathbf{x}_2=r\mathbf{x}_2-\mathbf{x}_1$,
we can rewrite (\ref{eq:relation-1'}), (\ref{eq:relation-2'}) as
\begin{align}
\mathbf{x}_3&=(\alpha\mathbf{x}_1+\beta\mathbf{e})\mathbf{x}_2^{-1}\mathbf{x}_1
=\alpha\mathbf{x}_1\mathbf{x}_2^{-1}\mathbf{x}_1+\beta\mathbf{x}_2^{-1}\mathbf{x}_1  \nonumber  \\
&=\alpha(r_{12}\mathbf{x}_1-\mathbf{x}_2)+\beta(t_2\mathbf{e}-\mathbf{x}_2)\mathbf{x}_1  \nonumber  \\
&=(\alpha r_{12}+\beta t_2)\mathbf{x}_1-\alpha\mathbf{x}_2-\beta\mathbf{x}_2\mathbf{x}_1,  \label{eq:rewrite-1}   \\
\mathbf{x}_3&=\mathbf{x}_2\mathbf{x}_1^{-1}(\alpha'\mathbf{x}_2+\beta'\mathbf{e})=\alpha'\mathbf{x}_2\mathbf{x}_1^{-1}\mathbf{x}_2
+\beta'\mathbf{x}_2\mathbf{x}_1^{-1} \nonumber  \\
&=\alpha'(r_{12}\mathbf{x}_2-\mathbf{x}_1)+\beta'\mathbf{x}_2(t_1\mathbf{e}-\mathbf{x}_1)  \nonumber  \\
&=(\alpha'r_{12}+\beta't_1)\mathbf{x}_2-\alpha'\mathbf{x}_1-\beta'\mathbf{x}_2\mathbf{x}_1.  \nonumber  
\end{align}
Hence
$$(\beta'-\beta)\mathbf{x}_2\mathbf{x}_1+(\alpha r_{12}+\beta t_2+\alpha')\mathbf{x}_1-(\alpha+\alpha'r_{12}+\beta't_1)\mathbf{x}_2=0.$$
If $\beta\ne \beta'$, then we can write $\mathbf{x}_2(\mathbf{x}_1-a\mathbf{e})=b\mathbf{x}_1$ for some $a,b$.
By Lemma \ref{lem:non-commuting}, $b\ne 0$, so
$\det(\mathbf{x}_1-a\mathbf{e})=b^2\ne 0$, but this implies $\mathbf{x}_2=b\mathbf{x}_1(\mathbf{x}_1-a\mathbf{e})^{-1}\in{\rm Cen}(\mathbf{x}_1)$,
a contradiction to Lemma \ref{lem:non-commuting}.

Thus,
$\beta'=\beta$. Consequently, $\alpha'=-\alpha r_{12}-\beta t_2$, and $\alpha=-\alpha'r_{12}-\beta't_1$.
Eliminating $\alpha'$ yields
\begin{align}
(r_{12}^2-1)\alpha=(t_1-t_2r_{12})\beta.  \label{eq:coefficient}
\end{align}
Taking traces of both sides of (\ref{eq:rewrite-1}) yields
\begin{align}
(t_1r_{12}-t_2)\alpha+r_{12}\beta=t_3.   \label{eq:trace-1}
\end{align}
By Lemma \ref{lem:det}, $\det(\alpha\mathbf{x}_1+\beta\mathbf{e})=1$ is equivalent to
\begin{align}
\alpha^2+t_1\alpha\beta+\beta^2=1.  \label{eq:det}
\end{align}

By (\ref{eq:rewrite-1}),
\begin{align}
r_{13}&={\rm tr}\big(\mathbf{x}_1^{-1}((r_{12}\alpha+t_2\beta)\mathbf{x}_1-\alpha\mathbf{x}_2-\beta\mathbf{x}_2\mathbf{x}_1)\big)  \nonumber  \\
&=2(r_{12}\alpha+t_2\beta)-r_{12}\alpha-t_2\beta=r_{12}\alpha+t_2\beta,   \label{eq:t13-0}  \\
r_{23}&={\rm tr}\big(\mathbf{x}_2^{-1}((r_{12}\alpha+t_2\beta)\mathbf{x}_1-\alpha\mathbf{x}_2-\beta\mathbf{x}_2\mathbf{x}_1)\big)  \nonumber  \\
&=(r_{12}\alpha+t_2\beta)r_{12}-2\alpha-t_1\beta\stackrel{(\ref{eq:coefficient})}=-\alpha.   \label{eq:t23-0}
\end{align}
Since $\mathbf{x}_3\mathbf{x}_1^{-1}\mathbf{x}_2=\mathbf{x}_3(t_1\mathbf{e}-\mathbf{x}_1)\mathbf{x}_2
=t_1\mathbf{x}_3\mathbf{x}_2-\mathbf{x}_3\mathbf{x}_1\mathbf{x}_2$, we have
\begin{align}
{\rm tr}(\mathbf{x}_3\mathbf{x}_1^{-1}\mathbf{x}_2)=t_1t_{23}-t_{123}=t_1t_2t_3-t_1r_{23}-t_{123}.    \label{eq:trace-2}
\end{align}
By (\ref{eq:relation-1'}), ${\rm tr}(\mathbf{x}_3\mathbf{x}_1^{-1}\mathbf{x}_2)=t_1\alpha+2\beta$,
which combined with (\ref{eq:t23-0}) yields
\begin{align}
t_{123}=t_1t_2t_3-2\beta.    \label{eq:t123}
\end{align}

It follows from (\ref{eq:relation-1}), (\ref{eq:relation-2}) that
$[\mathbf{x}_1^{-1},\mathbf{x}_2]=[\mathbf{x}_2^{-1},\mathbf{x}_3]=[\mathbf{x}_3^{-1},\mathbf{x}_1]$;
let $\mathbf{g}$ denote the common value, and let $\eta={\rm tr}(\mathbf{g})+3$.
Then by (\ref{eq:tr(commutator)}),
\begin{align}
\eta=r_{ij}^2-t_it_jr_{ij}+t_i^2+t_j^2+1, \qquad  1\le i<j\le 3.  \label{eq:eta-0}
\end{align}
In the case $i=2$, $j=3$, by (\ref{eq:t23-0}),
\begin{align}
\eta=\alpha^2+t_2t_3\alpha+t_2^2+t_3^2+1.   \label{eq:eta}
\end{align}

From (\ref{eq:rewrite-1}) we see that the irreducibility of $\rho_{\underline{\mathbf{x}}}$ is equivalent to that $\mathbf{x}_1,\mathbf{x}_2$ share no eigenvector. 
This is equivalent to $\eta\ne 5$, as we always assume.

\begin{rmk}
\rm Our goal is to find $\alpha,\beta,r_{12}$ satisfying (\ref{eq:coefficient})--(\ref{eq:det}), for any given $t_1,t_2,t_3$. As long as $\eta\ne 5$ is fulfilled, up to conjugacy $(\mathbf{x}_1,\mathbf{x}_2)$ is fixed, and then $\mathbf{x}_3$ is determined by (\ref{eq:rewrite-1}). This determines the conjugacy class of $\rho_{\underline{\mathbf{x}}}$.

It will be helpful to bear in the mind that $C$ is symmetric under the $(2\pi/3)$-rotation, and moreover, by (\ref{eq:t123}), $\beta$ is invariant under the rotation.
\end{rmk}

\subsection{Solving the trace equations}


If $t_1\alpha+\beta=0$, then (\ref{eq:coefficient})--(\ref{eq:det}) are equivalent to
\begin{align*}
\alpha\in\{\pm1\}, \qquad \beta=-\alpha t_1, \qquad  t_3=-\alpha t_2, \qquad  r_{12}^2-t_1t_2r_{12}+t_1^2=1.
\end{align*}
Remember (\ref{eq:t23-0}) that $r_{23}=-\alpha$. Rewrite (\ref{eq:t13-0}) as $r_{13}=t_1t_3+\alpha r_{12}$.
Moreover, the condition $\eta\ne 5$ is equivalent to $t_2^2\ne 3$.

From now on, suppose $t_1\alpha+\beta\ne 0$. Then (\ref{eq:trace-1}) becomes
\begin{align}
r_{12}=\frac{t_2\alpha+t_3}{t_1\alpha+\beta};   \label{eq:r}
\end{align}
substituting it into (\ref{eq:coefficient}), we obtain
\begin{align}
0&=\alpha(t_2\alpha+t_3)^2-(\alpha+t_1\beta)(t_1\alpha+\beta)^2+t_2\beta(t_2\alpha+t_3)(t_1\alpha+\beta)  \nonumber  \\
&=\alpha(t_2\alpha+t_3)^2-(t_1+\alpha\beta)(t_1\alpha+\beta)+t_2(t_2\alpha+t_3)(1-\alpha^2)  \nonumber   \\
&=(t_2t_3-t_1\beta)\alpha^2+(t_3^2-t_1^2-\beta^2+t_2^2)\alpha+t_2t_3-t_1\beta  \nonumber   \\
&=(t_2t_3-t_1\beta)(1-\beta^2-t_1\alpha\beta)+(t_3^2-t_1^2-\beta^2+t_2^2)\alpha+t_2t_3-t_1\beta  \nonumber  \\
&=\big((t_1^2-1)\beta^2-t_1t_2t_3\beta+t_2^2+t_3^2-t_1^2\big)\alpha+(2-\beta^2)(t_2t_3-t_1\beta).   \label{eq:alpha-0}
\end{align}

Suppose
\begin{align}
(t_1^2-1)\beta^2-t_1t_2t_3\beta+t_2^2+t_3^2-t_1^2=0.    \label{eq:vanish}
\end{align}
Then $(2-\beta^2)(t_2t_3-t_1\beta)=0$.
\begin{enumerate}
  \item If $t_1\beta=t_2t_3$, then $\beta^2=t_2^2+t_3^2-t_1^2$, so $(t_1^2-t_2^2)(t_1^2-t_3^2)=0$.
  \begin{itemize}
    \item If $t_1^2=t_2^2$, then $t_2=\kappa t_1$ with $\kappa\in\{\pm1\}$, and $\beta=\kappa t_3$. By (\ref{eq:r}), $r_{12}=\kappa$.
          By (\ref{eq:t13-0}), $r_{13}=t_1t_3+\kappa\alpha$, so by (\ref{eq:t23-0}),
          $r_{23}=\kappa(t_1t_3-r_{13}).$
          Moreover,
          \begin{align*}
          r_{13}^2-t_1t_3r_{13}+t_3^2=\alpha^2+t_2t_3\alpha+t_3^2=\alpha^2-t_1\alpha\beta+\beta^2\stackrel{(\ref{eq:det})}=1.
          \end{align*}
          As is easy to see, $\eta\ne 5$ is equivalent to $t_1^2\ne 3$.
    \item If $t_1^2=t_3^2$, then $t_3=\kappa t_1$ with $\kappa\in\{\pm1\}$, and $\beta=\kappa t_2$. Due to (\ref{eq:t23-0}),
          we may rewrite (\ref{eq:det}) as
          $$r_{23}^2-t_2t_3r_{23}+t_2^2=1.$$
          Furthermore, as can be verified,
          \begin{align*}
          &r_{12}\stackrel{(\ref{eq:r})}=\frac{t_2\alpha+t_3}{t_1\alpha+\beta}=t_1t_2+\kappa\alpha=t_1t_2-\kappa r_{23}, \\
          &r_{13}=r_{12}\alpha+t_2\beta=(t_1t_2+\kappa\alpha)\alpha+\kappa t_2^2=\kappa.
          \end{align*}
          As is easy to see, $\eta\ne 5$ is equivalent to $t_3^2\ne 3$.
  \end{itemize}
  \item If $t_1\beta\ne t_2t_3$, then $\beta^2=2$, i.e. $\beta\in\{\pm\sqrt{2}\}$, and (\ref{eq:vanish}) becomes
        \begin{align}
        t_1^2+t_2^2+t_3^2-2=t_1t_2t_3\beta.   \label{eq:beta}
        \end{align}
        If $t_3\beta=t_1t_2$, then $t_1^2+t_2^2+t_3^2-2=2t_3^2$, so that
        $$2(t_1^2+t_2^2-2)=2t_3^2=(t_3\beta)^2=t_1^2t_2^2,$$
        implying $t_1^2=2$ or $t_2^2=2$; respectively, $t_2^2=t_3^2$ or $t_1^2=t_3^2$. Either case can be incorporated into one of the previous cases.
        Thus we can assume $t_3\beta\ne t_1t_2$. By symmetry, we can also assume $t_2\beta\ne t_1t_3$.

        It follows from (\ref{eq:eta}), (\ref{eq:det}) that
        $\eta=(t_2t_3-t_1\beta)\alpha+t_2^2+t_3^2,$
        hence
        \begin{align}
        \alpha=\frac{\eta-t_2^2-t_3^2}{t_2t_3-t_1\beta}.  \label{eq:alpha-2}
        \end{align}
        Then (\ref{eq:det}) becomes
        \begin{align}
        \eta^2-(t_1^2+t_2^2+t_3^2+2)\eta+t_1^2t_2^2+t_2^2t_3^2+t_1^2t_3^2+4=0.   \label{eq:eta-2}
        \end{align}
        From (\ref{eq:t23-0}) we immediately se
        \begin{align*}
        r_{23}=\frac{\eta-t_2^2-t_3^2}{t_1\beta-t_2t_3}.
        \end{align*}
        By rotational symmetry, we have
        $$r_{12}=\frac{\eta-t_1^2-t_2^2}{t_3\beta-t_1t_2},  \qquad  r_{13}=\frac{\eta-t_1^2-t_3^2}{t_2\beta-t_1t_3}.$$
        Alternatively, these can be obtained from (\ref{eq:r}), (\ref{eq:t13-0}) with moderate efforts, using (\ref{eq:beta}), (\ref{eq:alpha-2}), (\ref{eq:eta-2}).
\end{enumerate}

\medskip

Now suppose
$(t_1^2-1)\beta^2-t_1t_2t_3\beta+t_2^2+t_3^2-t_1^2\ne 0.$
Then (\ref{eq:alpha-0}) becomes
\begin{align}
\alpha=\frac{(\beta^2-2)(t_2t_3-t_1\beta)}{(t_1^2-1)\beta^2-t_1t_2t_3\beta+t_2^2+t_3^2-t_1^2},   \label{eq:alpha-3}
\end{align}
which combined with (\ref{eq:det}) implies
\begin{align}
0&=(\beta^2-2)^2(t_2t_3-t_1\beta)^2+(\beta^2-1)\big((t_1^2-1)\beta^2-t_1t_2t_3\beta+t_2^2+t_3^2-t_1^2\big)^2  \nonumber  \\
&\ \ \ \ +t_1\beta(\beta^2-2)(t_2t_3-t_1\beta)\big((t_1^2-1)\beta^2-t_1t_2t_3\beta+t_2^2+t_3^2-t_1^2\big)   \nonumber  \\
&=\beta^6-t_1t_2t_3\beta^5+(\sigma_2-2\sigma_1-1)\beta^4+(8-\sigma_1)t_1t_2t_3\beta^3  \nonumber   \\
&\ \ \ \ +(\sigma_1^2+\sigma_3-4\sigma_2+2\sigma_1)\beta^2-8t_1t_2t_3\beta+4\sigma_2-\sigma_1^2,   \label{eq:canonical}
\end{align}
where Let
$\sigma_1=t_1^2+t_2^2+t_3^2$, $\sigma_2=t_1^2t_2^2+t_1^2t_3^2+t_2^2t_3^2$, $\sigma_3=t_1^2t_2^2t_3^2.$

By (\ref{eq:alpha-3}), the condition $t_1\alpha+\beta\ne 0$ is equivalent to
\begin{align}
\beta^3-\sigma_1\beta+2t_1t_2t_3\ne 0.   \label{eq:nonvanishing}
\end{align}
We can rewrite (\ref{eq:r}) as
\begin{align}
r_{12}=\frac{t_1t_2\beta^3+(1-t_1^2-t_2^2)t_3\beta^2+(t_3^2-2)t_1t_2\beta+(\sigma_1-2t_3^2)t_3}{\beta^3-\sigma_1\beta+2t_1t_2t_3}.  \label{eq:r12-3}
\end{align}
By rotational symmetry or by computation using (\ref{eq:t13-0}) (\ref{eq:t23-0}),
\begin{align}
r_{13}&=\frac{t_1t_3\beta^3+(1-t_1^2-t_3^2)t_2\beta^2+(t_2^2-2)t_1t_3\beta+(\sigma_1-2t_2^2)t_2}{\beta^3-\sigma_1\beta+2t_1t_2t_3},
\label{eq:r13-3}   \\
r_{23}&=\frac{t_2t_3\beta^3+(1-t_2^2-t_3^2)t_1\beta^2+(t_1^2-2)t_2t_3\beta+(\sigma_1-2t_1^2)t_1}{\beta^3-\sigma_1\beta+2t_1t_2t_3}.  \label{eq:r23-3}
\end{align}

If $\beta^2=2$, then by (\ref{eq:alpha-3}), $\alpha=0$, which contradicts (\ref{eq:det}). Hence $\beta^2\ne 2$.

If $\alpha \ne0$, then
\begin{align*}
\eta&\stackrel{(\ref{eq:eta-0})}=r_{12}^2-t_1t_2r_{12}+t_1^2+t_2^2+1
\stackrel{(\ref{eq:coefficient})}=\frac{\beta(t_1-t_2r_{12})}{\alpha}-t_1t_2r_{12}+t_1^2+t_2^2+2  \\
&\ =\frac{t_1\beta}{\alpha}+t_1^2+t_2^2+2-\frac{t_1\alpha+\beta}{\alpha}t_2r_{12}
\stackrel{(\ref{eq:r})}=t_1^2+2+\frac{t_1\beta-t_2t_3}{\alpha} \\
&\stackrel{(\ref{eq:alpha-3})}=\frac{3\beta^2+t_1t_2t_3\beta-\sigma_1-4}{\beta^2-2};
\end{align*}
if $\alpha=0$, then (\ref{eq:coefficient})--(\ref{eq:det}) become $t_1=t_2r_{12}$, $t_3=\beta r_{12}$, $\beta^2=1$, which imply
$$\eta=\sigma_1-t_1t_2t_3\beta+1=\frac{3\beta^2+t_1t_2t_3\beta-\sigma_1-4}{\beta^2-2}.$$
Thus, the condition $\eta\ne 5$ is equivalent to
\begin{align}
2\beta^2-t_1t_2t_3\beta+\sigma_1-6\ne 0.    \label{eq:nonvanishing-2}
\end{align}

\begin{cla}
If {\rm(\ref{eq:canonical})}, {\rm(\ref{eq:nonvanishing})} hold, then $(t_i^2-1)\beta^2-t_1t_2t_3\beta+\sigma_1-2t_i^2=0$ for at least one $i$.
\end{cla}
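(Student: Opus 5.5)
The approach is to work in $\mathbb{Q}[t_1,t_2,t_3][\beta]$ and compare the sextic (\ref{eq:canonical}) with the three quadratics
$$Q_i(\beta)=(t_i^2-1)\beta^2-t_1t_2t_3\beta+\sigma_1-2t_i^2,\qquad i=1,2,3,$$
and with the cubic $D(\beta)=\beta^3-\sigma_1\beta+2t_1t_2t_3$ from (\ref{eq:nonvanishing}). Note that $Q_1$ is exactly the denominator of (\ref{eq:alpha-3}), and $Q_2,Q_3$ are its images under the rotation of the Remark, which fixes $\beta$.

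The steps, in order, would be as follows.

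\begin{enumerate}
\item Reduce the sextic $P(\beta)$ of (\ref{eq:canonical}) modulo $D(\beta)$, using $\beta^3=\sigma_1\beta-2t_1t_2t_3$. This gives a remainder $R(\beta)$ of degree at most $2$ in $\beta$.
\item Search for a polynomial identity of the shape
$$c\,P=A\,D+B\,Q_1Q_2Q_3$$
with $A,B,c$ symmetric polynomials in $t_1^2,t_2^2,t_3^2$. The key input is the rotational symmetry: both $P$ and $D$ depend on $t_1,t_2,t_3$ only through $\sigma_1,\sigma_2,\sigma_3$ and $t_1t_2t_3$.
\item If such an identity exists, then on $\{P=0\}$ one gets $A\,D=-B\,Q_1Q_2Q_3$. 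One would then have to show that, where $D\neq0$, the factor $A$ cannot absorb everything, so that some $Q_i$ must vanish.
\end{enumerate}

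Before grinding out the division, I would test the claim at the most degenerate point, $t_1=t_2=t_3=0$. There
$$P=\beta^4(\beta^2-1),\qquad D=\beta^3,\qquad Q_i=-\beta^2.$$
Taking $\beta=1$, we have $P=0$ and $D=1\neq0$, yet every $Q_i=-1\neq0$. With $\eta=(3\beta^2-4)/(\beta^2-2)=1\neq5$, the condition (\ref{eq:nonvanishing-2}) also holds there.

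So the claim exactly as worded appears to fail at $t=0$, and this is the main obstacle. No identity of the type in step 2 can yield the stated conclusion under the hypothesis $D(\beta)\neq0$.

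Concretely, I would first recheck the expansion (\ref{eq:canonical}) and the definition of $Q_i$ at this special point. If they are confirmed, the argument cannot go through as planned, and I would report the claim as stated to be false.
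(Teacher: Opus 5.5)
Your counterexample is valid. At $t_1=t_2=t_3=0$, $\beta=1$, both (\ref{eq:canonical}) and (\ref{eq:nonvanishing}) hold, yet every $Q_i=-1$. The literal claim could not have been true anyway, since (\ref{eq:canonical}) was derived under the hypothesis $Q_1\neq 0$. What this exposes is a misprint, not an error in the mathematics. The intended statement is that the three quadratics cannot vanish simultaneously, i.e.\ $Q_i\neq 0$ for at least one $i$. The paper's proof begins ``assume $Q_i=0$ for each $i$'' and derives a contradiction, and the sentence after the proof uses the claim in exactly that form. The gap in your proposal is that it stops at refuting the wording. You did not identify the intended claim, and your step 2 identity $cP=AD+BQ_1Q_2Q_3$ was aimed at a false conclusion, so it proves neither version.

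The corrected claim has a short direct proof, which is the paper's route. Suppose $Q_1=Q_2=Q_3=0$. Since $Q_i-Q_j=(t_i^2-t_j^2)(\beta^2-2)$, either $\beta^2=2$ or all $t_i^2$ equal a common value $t^2$.

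\emph{Case $\beta^2=2$.} Then $Q_i=0$ says $\sigma_1-2=t_1t_2t_3\beta$, and so $D=\beta(2-\sigma_1)+2t_1t_2t_3=t_1t_2t_3(2-\beta^2)=0$, contradicting (\ref{eq:nonvanishing}). The paper divides by $\beta^2-2$ without comment; this case is the missing justification.

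\emph{Case $t_i^2=t^2$ for all $i$.} Write $t_1t_2t_3=\epsilon t^3$ with $\epsilon=\pm1$. Then $Q_i=(\beta t^2-\beta-\epsilon t)(\beta-\epsilon t)$ and $D=(\beta-\epsilon t)^2(\beta+2\epsilon t)$. So $D\neq 0$ forces $\beta(t^2-1)=\epsilon t$, hence $t^2\neq 1$ and $\beta=\epsilon t/(t^2-1)$. Substituting into (\ref{eq:canonical}) and multiplying by $(t^2-1)^6$ gives $t^4(t^2-2)^4(2t^2-1)^2=0$. If $t=0$ then $\beta=0$; if $t^2=2$ then $\beta=\epsilon t$; if $t^2=1/2$ then $\beta=-2\epsilon t$. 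In every case $D=0$, contradicting (\ref{eq:nonvanishing}).
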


\begin{proof}
Assume $(t_i^2-1)\beta^2-t_1t_2t_3\beta+\sigma_1-2t_i^2=0$ for each $i$. Then
$$t_i^2=\frac{\beta^2+t_1t_2t_3\beta-\sigma_1}{\beta^2-2}=:t^2.$$
Write $t_1t_2t_3=\epsilon t^3$, with $\epsilon\in\{\pm1\}$.
Then $(\beta t^2-\beta-\epsilon t)(\beta-\epsilon t)=0$. In view of (\ref{eq:nonvanishing}), $\beta\ne \epsilon t$, so
$\beta=\epsilon t/(t^2-1)$.
Substituting this into (\ref{eq:canonical}) yields
$$t^4(t^2-2)^4(2t^2-1)^2=0.$$
Hence $t^2\in\{0,2,1/2\}$. But this contradicts (\ref{eq:nonvanishing}).
\end{proof}

Thus, the case $(t_1^2-1)\beta^2-t_1t_2t_3\beta+\sigma_1-2t_1^2=0$ can be included: when it holds, we have
$(t_i^2-1)\beta^2-t_1t_2t_3\beta+\sigma_1-2t_i^2\ne 0$ for $i=2$ or $i=3$, so by symmetry we can still deduce (\ref{eq:canonical}).

\begin{rmk}
\rm If $t_1,t_2,t_3\in\{\pm2\}$, then $\sigma_1=12$, $\sigma_2=48$, $\sigma_3=64$, so (\ref{eq:canonical}) becomes
$(\beta-2\epsilon)^3(\beta-3\epsilon)(\beta^2+\epsilon\beta+2)=0$, with $\epsilon=t_1t_2t_3/8\in\{\pm1\}$.
By (\ref{eq:nonvanishing}), (\ref{eq:nonvanishing-2}), $\beta\ne 2\epsilon,3\epsilon$, hence $\beta^2+\epsilon\beta+2=0$.

When $t_1=t_2=t_3=2$ and $\beta=(-1-\sqrt{-7})/2$, we can compute
$$r_{12}=r_{13}=r_{23}=\frac{4\beta^3-14\beta^2+8\beta+8}{\beta^3-12\beta+16}=\frac{1-\sqrt{-7}}{2}, \qquad  t_{123}=9+\sqrt{-7}.$$
This is consistent with the hyperbolic structure given in \cite[Example 6.8.2]{Th80}.
\end{rmk}

\subsection{The result}

By the facts presented in Section \ref{sec:preparation}, a general element of $\mathcal{X}^{\rm irr}(C)$ can be identified with a tuple $(t_1,t_2,t_3,r_{12},r_{13},r_{23},\beta)$, with $\beta=(t_1t_2t_3-t_{123})/2$, as indicated in (\ref{eq:t123}).
Let
$$\sigma_1=t_1^2+t_2^2+t_3^2, \qquad  \sigma_2=t_1^2t_2^2+t_1^2t_3^2+t_2^2t_3^2, \qquad \sigma_3=t_1^2t_2^2t_3^2.$$

\begin{thm}\label{thm:main}
The irreducible character variety of $C$ is decomposed as
$$\mathcal{X}^{\rm irr}(C)=\Big({\bigcup}_{i=1}^3\mathcal{X}^+_{1,i}\Big)
\cup\Big({\bigcup}_{i=1}^3\mathcal{X}^-_{1,i}\Big)\cup\mathcal{X}_2^+\cup\mathcal{X}_2^-\cup\mathcal{X}_3,$$
where
\begin{itemize}
  \item $\mathcal{X}_{1,i}^{\pm}$ consists of $(t_1,t_2,t_3,r_{12},r_{13},r_{23},\beta)$ such that
        \begin{align*}
        t_{i+1}=\pm t_{i-1}\notin\{\sqrt{3},-\sqrt{3}\}, \qquad \beta=\pm t_i, \qquad  r_{i-1,i+1}=\pm1,   \\
        r_{i-1,i}=t_{i-1}t_i\mp r_{i,i+1}, \qquad  r_{i,i+1}^2-t_it_{i+1}r_{i,i+1}+t_i^2=1;
        \end{align*}
  \item $\mathcal{X}_2^{\pm}$ consists of $(t_1,t_2,t_3,r_{12},r_{13},r_{23},\beta)$ with
        \begin{align*}
        r_{12}=\frac{\eta-t_1^2-t_2^2}{t_3\beta-t_1t_2},  \qquad
        r_{13}=\frac{\eta-t_1^2-t_3^2}{t_2\beta-t_1t_3},  \qquad
        r_{23}=\frac{\eta-t_2^2-t_3^2}{t_1\beta-t_2t_3},
        \end{align*}
        for $t_1,t_2,t_3,\beta,\eta$ satisfying
        \begin{align*}
        \beta=\pm\sqrt{2}, \qquad \sigma_1-2=t_1t_2t_3\beta, \qquad \eta^2-(\sigma_1+2)\eta+\sigma_2+4=0, \\
        \eta\ne 5, \qquad  t_1\beta\ne t_2t_3, \qquad t_2\beta\ne t_1t_3, \qquad t_3\beta\ne t_1t_2.
        \end{align*}
  \item $\mathcal{X}_3$ consists of $(t_1,t_2,t_3,r_{12},r_{13},r_{23},\beta)$ with $r_{12}, r_{13}, r_{23}$ respectively given by
        {\rm(\ref{eq:r12-3})}--{\rm(\ref{eq:r23-3})}, for $t_1,t_2,t_3,\beta$ satisfying
        \begin{align*}
        &\beta^6-t_1t_2t_3\beta^5+(\sigma_2-2\sigma_1-1)\beta^4+(8-\sigma_1)t_1t_2t_3\beta^3  \\
        &\ \ \ \ \ \ +(\sigma_1^2+\sigma_3-4\sigma_2+2\sigma_1)\beta^2-8t_1t_2t_3\beta+4\sigma_2-\sigma_1^2=0, \\
        &\beta^2\ne 2, \qquad  \beta^3-\sigma_1\beta+2t_1t_2t_3\ne 0, \qquad  2\beta^2-t_1t_2t_3\beta+\sigma_1-6\ne 0.
        \end{align*}
\end{itemize}
\end{thm}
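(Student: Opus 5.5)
The theorem essentially assembles the case analysis of the previous subsection. I will argue both inclusions. For ``$\subseteq$'', take an irreducible $\rho_{\underline{\mathbf{x}}}$. By the Remark, its character is determined by $(t_1,t_2,t_3,r_{12},r_{13},r_{23},\beta)$, where $\beta=(t_1t_2t_3-t_{123})/2$ by (\ref{eq:t123}); the $t_{ij}=t_it_j-r_{ij}$ recover the coordinates in (\ref{eq:F3}).

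Then I follow the three-way split already made.
\begin{itemize}
\item \textbf{Case $t_1\alpha+\beta=0$.} Here $\alpha=\pm1$, $\beta=\mp t_1$, $t_3=\mp t_2$, $r_{23}=\mp1$, and $r_{13}=t_1t_3\pm r_{12}$, together with the norm equation for $r_{12}$ and $t_2^2\neq3$. This is exactly $\mathcal{X}^{\mp}_{1,1}$ after the cyclic relabelling $i=1$, with indices $i\pm1$ taken mod $3$. Here I would check the sign conventions carefully: $\beta=\pm t_i$ corresponds to $\alpha=\mp1$.
\item \textbf{Case (\ref{eq:vanish}) holds.} The two subcases with $t_1\beta=t_2t_3$ produce $\mathcal{X}^\pm_{1,3}$ and $\mathcal{X}^\pm_{1,2}$ respectively, again after matching indices and signs. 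The subcase $\beta^2=2$ produces $\mathcal{X}_2^{\pm}$, using (\ref{eq:beta}), (\ref{eq:eta-2}) and the formulas for $r_{ij}$. Its degenerate sub-subcases $t_3\beta=t_1t_2$ or $t_2\beta=t_1t_3$ were already shown to fall into the $\mathcal{X}_1$ families, which justifies the three inequalities in $\mathcal{X}_2^\pm$.
\item \textbf{Remaining case.} The Claim shows we may permute indices so that the denominator in (\ref{eq:alpha-3}) is nonzero. We then get (\ref{eq:canonical}), the formulas (\ref{eq:r12-3})--(\ref{eq:r23-3}), $\beta^2\ne2$, (\ref{eq:nonvanishing}) and (\ref{eq:nonvanishing-2}), i.e.\ membership in $\mathcal{X}_3$.
\end{itemize}
Irreducibility in each case is the condition $\eta\ne5$, which was translated case by case.

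For ``$\supseteq$'', given a tuple in one of the pieces, I reverse the steps. From $t_1,t_2,r_{12}$ with $\eta\ne5$ there is, up to conjugacy, a unique pair $(\mathbf{x}_1,\mathbf{x}_2)$ with no common eigenvector, by the fact quoted from \cite{CY24}. I then define $\alpha$ in each case: $\pm1$, via (\ref{eq:alpha-2}), or via (\ref{eq:alpha-3}). I set $\beta'=\beta$ and $\alpha'=-\alpha r_{12}-\beta t_2$, and define $\mathbf{x}_3$ by (\ref{eq:rewrite-1}). Equations (\ref{eq:coefficient}) and (\ref{eq:det}) ensure that $\alpha\mathbf{x}_1+\beta\mathbf{e}\in G$ commutes with $\mathbf{x}_1$. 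The second expression for $\mathbf{x}_3$ coincides with the first precisely when $\alpha=-\alpha'r_{12}-\beta t_1$, which is (\ref{eq:coefficient}). Hence both (\ref{eq:relation-1'}) and (\ref{eq:relation-2'}) hold, and $\det\mathbf{x}_3=1$ follows from $\mathbf{x}_3=(\alpha\mathbf{x}_1+\beta\mathbf{e})\mathbf{x}_2^{-1}\mathbf{x}_1$. Finally, (\ref{eq:trace-1}), (\ref{eq:t13-0}), (\ref{eq:t23-0}) and (\ref{eq:t123}) show that the traces of the constructed representation are the prescribed $t_3,r_{13},r_{23},\beta$.

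The main obstacle is this converse verification: checking that in each piece the listed equations really imply (\ref{eq:coefficient})--(\ref{eq:det}) with the chosen $\alpha$. This matters most for $\mathcal{X}_3$, where one must show that (\ref{eq:canonical}) with (\ref{eq:alpha-3}) reproduces (\ref{eq:det}), and that (\ref{eq:r12-3}) equals (\ref{eq:r}). I would handle it by reversing the polynomial manipulations leading to (\ref{eq:alpha-0}) and (\ref{eq:canonical}); every step there was an equivalence given the nonvanishing of the relevant denominators, which the stated inequalities guarantee. Similarly, for $\mathcal{X}_2^\pm$ one needs the $r_{ij}$ formulas to be consistent with (\ref{eq:r}), which the text asserts follows with moderate effort from (\ref{eq:beta}), (\ref{eq:alpha-2}) and (\ref{eq:eta-2}).
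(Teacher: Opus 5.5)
Your proposal follows the paper's own argument: the same split into $t_1\alpha+\beta=0$, the case where (\ref{eq:vanish}) holds, and the generic case, with the index and sign matching to $\mathcal{X}^{\pm}_{1,i}$, $\mathcal{X}^{\pm}_2$ and $\mathcal{X}_3$ done correctly, and your explicit converse is just the reversibility that the paper's Remark takes for granted. There are two small corrections. First, in the generic case the denominator of (\ref{eq:alpha-3}) is already nonzero by the case hypothesis, so the Claim (read as ``$\ne 0$ for at least one $i$'') is not needed there; it is needed in your converse, because the inequalities defining $\mathcal{X}_3$ do not by themselves rule out that this denominator vanishes. Second, in the converse for $\mathcal{X}^{\pm}_{1,2}$ and $\mathcal{X}^{\pm}_{1,3}$ you should take $\alpha=-r_{23}$ (or rotate from $\mathcal{X}^{\pm}_{1,1}$) rather than $\alpha=\pm1$.
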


In total, $\mathcal{X}^{\rm irr}(C)$ consists of $9$ irreducible components.
The Zariski closure of $\mathcal{X}_3$ is the canonical component, which is 3-dimensional, and each other component has dimension $2$.

\section{Twisted Alexander polynomial}

Recall the definition of twisted Alexander polynomial and some basic facts, which were presented at the beginning of \cite[Section 4]{CY24}.

For a ring $R$, let $\mathcal{M}_n(R)$ denote the ring of $n\times n$ matrices over $R$.

Suppose $L$ is an oriented link, with components $K_1,\ldots,K_m$.
Let
$$\pi:=\pi(L)\cong\langle x_1,\ldots,x_\ell\mid r_1,\ldots,r_{\ell-1}\rangle$$
be a presentation strongly Tietz equivalent to some Wirtinger presentation, such that each $x_j$ comes from an arc of $K_{\sigma(j)},\sigma(j)\in\{1,\ldots,m\}$.
Let $F_\ell$ denote the free group generated by $x_1,\ldots,x_\ell$.
Let $M$ be the $(\ell-1)\times\ell$ matrix whose $(i,j)$-entry is the image of $\partial r_i/\partial x_j$ (the Fox derivative) under the ring homomorphism $q:\mathbb{Z}[F_\ell]\to\mathbb{Z}[\pi]$
induced by the canonical quotient map $F_\ell\twoheadrightarrow\pi$, and let $M_v\in\mathcal{M}_{\ell-1}(\mathbb{Z}[\pi])$ be the matrix obtained from deleting the $v$-th column of $M$.
Let
$\mathfrak{a}:\pi\to \mathbb{Z}^{\oplus m}=\langle s_1\rangle\oplus\cdots\oplus\langle s_m\rangle$
denote the abelianization map, which sends $x_j$ to $s_{\sigma(j)}$.

Given a representation $\rho:\pi\to{\rm GL}(n,\mathbb{C})$, extend the composite
$$\pi\stackrel{\mathfrak{a}\times\rho}\longrightarrow\mathbb{Z}^{\oplus m}\times {\rm GL}(n,\mathbb{C})\hookrightarrow \mathbb{Z}[s_1^{\pm1},\ldots,s_m^{\pm1}]\times\mathcal{M}_n(\mathbb{C})\to\mathcal{M}_n(\mathbb{C}[s_1^{\pm1},\ldots,s_m^{\pm1}])$$
by linearity to a ring homomorphism
\begin{align*}
\Phi_\rho:\mathbb{Z}[\pi]\to\mathcal{M}_n(\mathbb{C}[s_1^{\pm1},\ldots,s_m^{\pm1}]).
\end{align*}
The {\it twisted Alexander polynomial} of $L$ associated to $\rho$ is defined to be
$$\Delta_{L}^{\rho}=\Delta_{L}^{\rho}(s_1,\ldots,s_m)\doteq\frac{\det\Phi_\rho(M_v)}{\det\Phi_\rho(1-x_v)}\in \mathbb{C}(s_1,\ldots,s_m),$$
where $\Phi_\rho(M_v)\in\mathcal{M}_{n(\ell-1)}(\mathbb{C}[s_1^{\pm1},\ldots,s_m^{\pm1}])$ is the big matrix obtained from $M_v$ by replacing each entry with its image under $\Phi_\rho$, and $\doteq$ means an equality up to multiplication by $s_1^{k_1}\cdots s_m^{k_m}$ for $k_1,\ldots,k_m\in\mathbb{Z}$.


\medskip


The remainder of this section is devoted to proving
\begin{thm}\label{thm:TAP}
Given a representation $\rho:\pi(C)\to G$, let $t_i={\rm tr}(\rho(x_i))$, $r_{ij}={\rm tr}(\rho(x_i^{-1}x_j))$,
$t_{123}={\rm tr}(\rho(x_1x_2x_3))$, then
\begin{align*}
\Delta^{\rho}_C
&\doteq t_{123}-t_1t_ 2t_3+\Big(s_1+\frac{1}{s_1}\Big)r_{23}+\Big(s_2+\frac{1}{s_2}\Big)r_{13}+\Big(s_3+\frac{1}{s_3}\Big)r_{12}  \\
&\ \ \ \ -\Big(\frac{s_2}{s_3}+\frac{s_3}{s_2}\Big)t_1-\Big(\frac{s_1}{s_3}+\frac{s_3}{s_1}\Big)t_2-\Big(\frac{s_1}{s_2}+\frac{s_2}{s_1}\Big)t_3  \\
&\ \ \ \ +\frac{s_2s_3}{s_1}+\frac{s_1s_3}{s_2}+\frac{s_1s_2}{s_3}+\frac{s_1}{s_2s_3}+\frac{s_2}{s_1s_3}+\frac{s_3}{s_1s_2}.
\end{align*}
\end{thm}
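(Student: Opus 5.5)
We use the two-relator presentation (\ref{eq:presentation}), with relators
$$r_1=x_1(x_3x_1^{-1}x_2)x_1^{-1}(x_3x_1^{-1}x_2)^{-1},\qquad r_2=x_2(x_1x_2^{-1}x_3)x_2^{-1}(x_1x_2^{-1}x_3)^{-1}.$$
This presentation is obtained from the Wirtinger presentation by strong Tietze moves, since we only substituted the $y_j$'s. The link $C$ has three components, and each $x_i$ is a meridian of the $i$-th component, so $\mathfrak{a}(x_i)=s_i$. We delete the column of $x_3$, taking $v=3$. The denominator is then $\det\Phi_\rho(1-x_3)=\det(\mathbf{e}-s_3\mathbf{x}_3)=1-t_3s_3+s_3^2\doteq s_3+s_3^{-1}-t_3$. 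The numerator is the determinant of the $4\times4$ matrix
$$\begin{pmatrix}\Phi_\rho(\partial r_1/\partial x_1)&\Phi_\rho(\partial r_1/\partial x_2)\\ \Phi_\rho(\partial r_2/\partial x_1)&\Phi_\rho(\partial r_2/\partial x_2)\end{pmatrix}.$$
The theorem asserts that this determinant equals $(s_3+s_3^{-1}-t_3)$ times the displayed expression, up to a unit.

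To compute the Fox derivatives we use that, for a commutator $[a,w]=awa^{-1}w^{-1}$ whose image in $\pi$ is trivial, $\partial[a,w]=(1-w)\,\partial a+(a-1)\,\partial w$ after applying $q$, because $awa^{-1}=w$ in $\pi$. For $r_1$ put $w_1=x_3x_1^{-1}x_2$. Then
$$\partial w_1/\partial x_1=-x_3x_1^{-1},\qquad \partial w_1/\partial x_2=x_3x_1^{-1}.$$
Hence
$$\partial r_1/\partial x_1=1-w_1-(x_1-1)x_3x_1^{-1},\qquad \partial r_1/\partial x_2=(x_1-1)x_3x_1^{-1}.$$
The same computation for $r_2$, with $w_2=x_1x_2^{-1}x_3$, gives
$$\partial r_2/\partial x_1=x_2-1,\qquad \partial r_2/\partial x_2=1-w_2-(x_2-1)x_1x_2^{-1}.$$
Next we apply $\Phi_\rho$, writing $X_i=s_i\mathbf{x}_i$. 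By (\ref{eq:relation-1'}) and (\ref{eq:relation-2'}), $\Phi_\rho(w_1)=(s_3s_2/s_1)(\alpha\mathbf{x}_1+\beta\mathbf{e})$ and $\Phi_\rho(w_2)=(s_1s_3/s_2)(\alpha'\mathbf{x}_2+\beta\mathbf{e})$. So every block is a combination of $\mathbf{e},\mathbf{x}_1,\mathbf{x}_2$ and short products of them.

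To evaluate the $4\times4$ determinant we simplify by block operations. We right-multiply the first block column by $\Phi_\rho(x_1)$ or a similar factor, and we use the block formula $\det\begin{pmatrix}A&B\\C&D\end{pmatrix}=\det(A)\det(D-CA^{-1}B)$ whenever a block is invertible generically. Alternatively, after multiplying by the invertible $(\mathbf{x}_1-\mathbf{e})^{-1}$ and $(\mathbf{x}_2-\mathbf{e})^{-1}$ (which is legitimate for generic $\rho$), we can make an off-diagonal block scalar or the identity. The determinant then reduces to $\det$ of a single $2\times2$ matrix that is a polynomial in $\mathbf{x}_1,\mathbf{x}_2,\mathbf{x}_3$ with coefficients in $\mathbb{C}[s^{\pm1}]$. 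We expand it by Lemma \ref{lem:det}, namely $\det(\mathbf{a}+\mathbf{b})=\det\mathbf{a}+\det\mathbf{b}+{\rm tr}(\mathbf{a}\mathbf{b}^\ast)$, iterated over the summands. Every trace that appears is a trace of a word in $\mathbf{x}_1,\mathbf{x}_2,\mathbf{x}_3$ of length at most $3$, so it can be expressed through $t_i$, $r_{ij}$ and $t_{123}$.

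The main obstacle is this bookkeeping: showing that the resulting Laurent polynomial factors as $(s_3+s_3^{-1}-t_3)$ times the symmetric expression in the statement. For this we use the identities (\ref{eq:t13-0}), (\ref{eq:t23-0}) and (\ref{eq:t123}) ($r_{23}=-\alpha$, $t_{123}=t_1t_2t_3-2\beta$), together with (\ref{eq:coefficient}) and (\ref{eq:det}). A cleaner way to organize the check is by degrees in the $s_i$. We compare coefficients monomial by monomial: the $s_3^{\pm2}$-terms, the terms $s_1^{\pm1}s_2^{\pm1}s_3^{\pm1}$, and the constant term $t_{123}-t_1t_2t_3$. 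This keeps each identity small. By the rotational symmetry of $C$, which permutes the $x_i$ and $s_i$ cyclically, the answer must be invariant under the cyclic action, which cuts the checking roughly by a factor of three.

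The argument above assumes $\rho$ irreducible and generic, so that the auxiliary matrices are invertible. Both sides are polynomial (after clearing the fixed denominator) in the entries of $\rho$, so the identity extends by Zariski density to all of $\mathcal{R}(C)$. For the reducible representations we can also verify it directly with upper-triangular matrices. Since we only need generic irreducible $\rho$, the dense part $\mathcal{X}_3$ together with continuity suffices to reach the other components too, because the identity is polynomial in the matrix entries on the whole representation variety.
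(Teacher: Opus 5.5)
There is a genuine gap. Your Fox derivatives are correct, but the proposal stops where the proof actually begins. The identity $\det\Phi_\rho(M_3)\doteq(1-t_3s_3+s_3^2)\cdot(\text{stated expression})$ is the entire content of the theorem. You leave it as ``bookkeeping'' by coefficient comparison, and you give no mechanism that would produce the factor $1-t_3s_3+s_3^2$.

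The missing idea is already in your commutator formula $\partial[a,w]=(1-w)\partial a+(a-1)\partial w$: in the row of $r_i=[x_i,w_i]$, every entry outside the $x_i$-column carries the left factor $x_i-1$. The paper deletes $v=2$, so the second block row is $(s_2\mathbf{x}_2-\mathbf{e})$ times $(\mathbf{e},\ s_1s_2^{-1}\mathbf{x}_1\mathbf{x}_2^{-1})$. The factor $\det(s_2\mathbf{x}_2-\mathbf{e})$ then cancels the denominator exactly. Block column operations reduce everything to the single $2\times 2$ determinant $\det\big(\mathbf{z}-s_1^{-1}s_2(s_1\mathbf{x}_1-\mathbf{e})\mathbf{x}_2\mathbf{x}_1^{-1}\big)$, where $\mathbf{z}=\Phi_\rho(\partial r_1/\partial x_1)$. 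This is written as $\det(\mathbf{u}\mathbf{v}+\mathbf{w})$ and split into three pieces by Lemma \ref{lem:det}.

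Deleting $v=3$ is the one choice for which no row has this factor. You would first need, e.g., the fundamental formula $\sum_j(\partial r_i/\partial x_j)(x_j-1)=r_i-1\mapsto 0$ in $\mathbb{Z}[\pi]$ to convert to $v=2$. Your claim that only traces of words of length at most $3$ occur is also inaccurate. ${\rm tr}(\mathbf{u}\mathbf{v}\mathbf{w}^\ast)$ contains, e.g., ${\rm tr}(\mathbf{x}_1\mathbf{x}_3\mathbf{x}_1^{-1}\mathbf{x}_2^{-1}\mathbf{x}_1\mathbf{x}_3^{-1})$. This equals $r_{12}$ only after using (\ref{eq:relation-1}) in the form $\mathbf{x}_1^{-1}\mathbf{x}_2^{-1}\mathbf{x}_1\mathbf{x}_3^{-1}=\mathbf{x}_2^{-1}\mathbf{x}_1\mathbf{x}_3^{-1}\mathbf{x}_1^{-1}$. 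This use of the relator is what makes the answer linear in the $r_{ij}$ and $t_{123}$.

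Your closing extension argument is wrong as stated. Irreducible representations are not Zariski dense in $\mathcal{R}(C)$; generic abelian representations, for instance, are not limits of irreducible ones. Moreover, $\mathcal{X}_3$ is not dense in the other eight components $\mathcal{X}^{\pm}_{1,i}$, $\mathcal{X}^{\pm}_2$. A polynomial identity verified on one irreducible component says nothing about another, so ``continuity'' from $\mathcal{X}_3$ reaches none of them. You would need your generic hypothesis to hold densely on every component, plus a separate computation for reducible $\rho$.

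This genericity is self-inflicted. It comes from parametrizing by $\alpha,\beta$, which needs $\beta'=\beta$ and hence irreducibility. It also comes from inverting $\mathbf{x}_i-\mathbf{e}$, whereas the actual blocks are $\Phi_\rho(x_i-1)=s_i\mathbf{x}_i-\mathbf{e}$, whose determinant $1-t_is_i+s_i^2$ is a nonzero element of $\mathbb{C}[s_i]$ for every $\rho$. The paper works over $\mathbb{C}(s_1,s_2,s_3)$ and uses only the relation (\ref{eq:relation-1}) and the general trace identity (\ref{eq:trace-2}). Its computation therefore holds for all $\rho\in\mathcal{R}(C)$ at once, with no density argument.
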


To keep the expressions compact, for $w\in\mathbb{Z}[F_3]$ we denote $q(w)\in\mathbb{Z}[\pi]$ also by $w$.
With the presentation (\ref{eq:presentation}) which is strongly Tietze equivalent to a Wirtinger presentation,
$$r_1=x_1x_3x_1^{-1}x_2(x_3x_1^{-1}x_2x_1)^{-1}, \qquad  r_2=x_2x_1x_2^{-1}x_3(x_1x_2^{-1}x_3x_2)^{-1}.$$
Using the properties of Fox derivative (see \cite[Page 117]{Li97}) and that when $r_i=fg^{-1}$ with $f,g\in F_3$,
$$\frac{\partial r_i}{\partial x_j}=\frac{\partial}{\partial x_j}(f\overline{g})=\frac{\partial}{\partial x_j}f-f\overline{g}\frac{\partial}{\partial x_j}g=\frac{\partial}{\partial x_j}(f-g) \quad \text{in\ } \mathbb{Z}[\pi],$$
we obtain the following equations in $\mathbb{Z}[\pi]$:
\begin{align*}
\frac{\partial r_1}{\partial x_1}&=\frac{\partial}{\partial x_1}(x_1x_3x_1^{-1}x_2-x_3x_1^{-1}x_2x_1)
=1-x_1x_3x_1^{-1}+x_3x_1^{-1}-x_3x_1^{-1}x_2,  \\
\frac{\partial r_1}{\partial x_3}&=\frac{\partial}{\partial x_3}(x_1x_3x_1^{-1}x_2-x_3x_1^{-1}x_2x_1)
=x_1-1,  \\
\frac{\partial r_2}{\partial x_1}&=\frac{\partial}{\partial x_1}(x_2x_1x_2^{-1}x_3-x_1x_2^{-1}x_3x_2)
=x_2-1,  \\
\frac{\partial r_2}{\partial x_3}&=\frac{\partial}{\partial x_3}(x_2x_1x_2^{-1}x_3-x_1x_2^{-1}x_3x_2)
=x_2x_1x_2^{-1}-x_1x_2^{-1}.
\end{align*}
Hence
\begin{align*}
M_2=\left(\begin{array}{cc} 1-x_1x_3x_1^{-1}+x_3x_1^{-1}-x_3x_1^{-1}x_2 & x_1-1  \\
x_2-1 & x_2x_1x_2^{-1}-x_1x_2^{-1}  \end{array}\right).   
\end{align*}

Let $\mathbf{x}_i=\rho(x_i)$, $i=1,2,3$.
Then
\begin{align*}
\Delta^{\rho}_C&\doteq\frac{\det\Phi_\rho(M_2)}{\det\Phi_\rho(1-x_2)}  \\
&=\frac{1}{\det(\mathbf{e}-s_2\mathbf{x}_2)}
\det\left(\begin{array}{cc}
\mathbf{z} &  s_1\mathbf{x}_1-\mathbf{e}  \\
s_2\mathbf{x}_2-\mathbf{e} &  s_1s_2^{-1}(s_2\mathbf{x}_2-\mathbf{e})\mathbf{x}_1\mathbf{x}_2^{-1} \end{array}\right)   \\
&\doteq\det\left(\begin{array}{cc}
\mathbf{z} &  s_1^{-1}s_2(s_1\mathbf{x}_1-\mathbf{e})\mathbf{x}_2\mathbf{x}_1^{-1}  \\
\mathbf{e} &  \mathbf{e}  \end{array}\right)  \\
&=\det\big(\mathbf{z}-s_1^{-1}s_2(s_1\mathbf{x}_1-\mathbf{e})\mathbf{x}_2\mathbf{x}_1^{-1}\big),
\end{align*}
where
$$\mathbf{z}=\mathbf{e}-s_3\mathbf{x}_1\mathbf{x}_3\mathbf{x}_1^{-1}+s_3s_1^{-1}\mathbf{x}_3\mathbf{x}_1^{-1}
-s_3s_1^{-1}s_2\mathbf{x}_3\mathbf{x}_1^{-1}\mathbf{x}_2.$$
We can write
$\Delta^{\rho}_C\doteq\det(\mathbf{u}\mathbf{v}+\mathbf{w})$, with
$$\mathbf{u}=\mathbf{e}-s_1\mathbf{x}_1, \qquad  \mathbf{v}=\frac{s_2}{s_1}\Big(\mathbf{e}+\frac{s_3}{s_2}\mathbf{x}_3\mathbf{x}_2^{-1}\Big)\mathbf{x}_2\mathbf{x}_1^{-1},   \qquad
\mathbf{w}=\mathbf{e}-\frac{s_2s_3}{s_1}\mathbf{x}_3\mathbf{x}_1^{-1}\mathbf{x}_2.$$
This can be compared with the formula given in \cite[Example 4.4]{Ch21}.

By Lemma \ref{lem:det},
\begin{align}
\Delta^{\rho}_C\doteq\det(\mathbf{u})\det(\mathbf{v})+\det(\mathbf{w})+{\rm tr}(\mathbf{u}\mathbf{v}\mathbf{w}^\ast),  \label{eq:TAP}
\end{align}
and
\begin{align}
\det(\mathbf{u})&=1+s_1^2-s_1t_1,  \qquad
\det(\mathbf{v})=\frac{s_2^2}{s_1^2}\Big(1+\frac{s_3^2}{s_2^2}+\frac{s_3}{s_2}r_{23}\Big),  \label{eq:det(u&v)}  \\
\det(\mathbf{w})&=1+\frac{s_2^2s_3^2}{s_1^2}-\frac{s_2s_3}{s_1}{\rm tr}(\mathbf{x}_3\mathbf{x}_1^{-1}\mathbf{x}_2)  \nonumber  \\
&=1+\frac{s_2^2s_3^2}{s_1^2}-\frac{s_2s_3}{s_1}(t_1t_2t_3-t_1r_{23}-t_{123});   \label{eq:det(w)}
\end{align}
for the last equality, refer to (\ref{eq:trace-2}).

Note that
\begin{align*}
\mathbf{u}\mathbf{v}&=\frac{s_2}{s_1}\mathbf{x}_2\mathbf{x}_1^{-1}+\frac{s_3}{s_1}\mathbf{x}_3\mathbf{x}_1^{-1}
-s_2\mathbf{x}_1\mathbf{x}_2\mathbf{x}_1^{-1}-s_3\mathbf{x}_1\mathbf{x}_3\mathbf{x}_1^{-1},  \\
\mathbf{w}^\ast&=\mathbf{e}-s_2s_3s_1^{-1}\mathbf{x}_2^{-1}\mathbf{x}_1\mathbf{x}_3^{-1}.
\end{align*}
By (\ref{eq:relation-1}), $\mathbf{x}_1^{-1}\mathbf{x}_2^{-1}\mathbf{x}_1\mathbf{x}_3^{-1}=\mathbf{x}_2^{-1}\mathbf{x}_1\mathbf{x}_3^{-1}\mathbf{x}_1^{-1}$.
We can compute
\begin{align}
&{\rm tr}(\mathbf{u}\mathbf{v}\mathbf{w}^\ast)  \nonumber \\
=\ &{\rm tr}(\mathbf{u}\mathbf{v})
-\frac{s_2s_3}{s_1}{\rm tr}\Big(\frac{s_2}{s_1}\mathbf{x}_1\mathbf{x}_3^{-1}\mathbf{x}_1^{-1}
+\frac{s_3}{s_1}\mathbf{x}_3\mathbf{x}_1^{-1}\mathbf{x}_2^{-1}\mathbf{x}_1\mathbf{x}_3^{-1} \nonumber  \\
&\hspace{28mm} -s_2\mathbf{x}_1^2\mathbf{x}_3^{-1}\mathbf{x}_1^{-1}
-s_3\mathbf{x}_1\mathbf{x}_3\mathbf{x}_2^{-1}\mathbf{x}_1\mathbf{x}_3^{-1}\mathbf{x}_1^{-1}\Big) \nonumber  \\
=\ &\frac{s_2}{s_1}r_{12}+\frac{s_3}{s_1}r_{13}-s_2t_2-s_3t_3-\frac{s_2s_3}{s_1}\Big(\frac{s_2}{s_1}t_3+\frac{s_3}{s_1}t_2-s_2r_{13}-s_3r_{12}\Big) \nonumber  \\
=\ &\frac{s_2}{s_1}(1+s_3^2)r_{12}+\frac{s_3}{s_1}(1+s_2^2)r_{13}-s_2\Big(1+\frac{s_3^2}{s_1^2}\Big)t_2-s_3\Big(1+\frac{s_2^2}{s_1^2}\Big)t_3.
\label{eq:trace(uvw)}
\end{align}
Substituting (\ref{eq:det(u&v)})--(\ref{eq:trace(uvw)}) into (\ref{eq:TAP}) and multiplying by $s_1s_2^{-1}s_3^{-1}$ yields the formula given in Theorem \ref{thm:TAP}.



\bigskip

\noindent
Haimiao Chen (orcid: 0000-0001-8194-1264)\ \ \ \ {\it chenhm@math.pku.edu.cn} \\
Department of Mathematics, Beijing Technology and Business University, \\
Liangxiang Higher Education Park, Fangshan District, Beijing, China.

\end{document}